\documentclass{amsart}
\usepackage{amssymb,latexsym}
\usepackage{amsmath}
\usepackage{amscd}
\usepackage{graphicx}
 \usepackage{color}
\usepackage{enumerate}
\numberwithin{equation}{section}
\theoremstyle{plain}
 \newtheorem{theorem}{Theorem}[section]

 \newtheorem{corollary}[theorem]{Corollary}
\theoremstyle{definition}

\newcommand \datum {18:45, December 5, 2016} 
\renewcommand \phi {\varphi}
\newcommand \paraH {H^{+1}}
\newcommand \range [1] {\textup{Range}(#1)}
\newcommand \uepsilon {u}
\newcommand \defiff {\overset{\textup{def}}{\iff} }
\newcommand \lexless{ <^{\textup{lex}}}
\newcommand \lexleq{ \leq^{\textup{lex}}}
\renewcommand \epsilon {{\boldsymbol\varepsilon}}
\newcommand \vvecr {\vec r\kern 1.5pt'}
\newcommand \ucirc {C_{\kern-1pt\textup{unit}}}
\newcommand \bnd [1] {\partial #1} 
\newcommand \lne {\ell}
\newcommand \flne[1] {{\ell^-_{#1}}}
\newcommand \llne[1] {{\ell^+_{#1}}}
\newcommand \fll[1] {{\ell_{#1}}}

\newcommand \slcr [1] {\textup{Sli}(#1)}
\newcommand \asli [2] {\textup{Sli}_{#1}(#2)}

\newcommand \cylin  {\textup{Cyl}}
\newcommand \dir [1] {\textup{dir}(#1)}
\newcommand \lder[1] {#1'_{-}}
\newcommand \rder[1] {#1'_{+}}
\newcommand \sder[1] {#1^{(\textup{sub})}}
\newcommand \nothing [1] {}
\newcommand\set [1]{\{#1\}}
\newcommand \tuple [1] {\langle #1 \rangle}
\newcommand \pair [2] {\tuple{#1,#2}}
\newcommand \real {\mathbb R}

\newcommand \preal {\real^{2}}
\newcommand \red [1] {{\color{red}#1\color{black}}}
\newcommand \tbf[1] {\textbf{#1}} 
\newcommand \dist[2]{\textup{dist}(#1,#2)}
\newcommand \mdist[2]{{\textup{d}}_{\textup{M}}(#1,#2)}

\newcommand \fmidp[1] {\textup{midp}}

%



%
%
%
\begin{document}
\title[Supporting lines of compact convex sets]
{A note and a short survey on supporting lines of compact convex sets in the plane}


\author[G.\ Cz\'edli]{G\'abor Cz\'edli}
\email{czedli@math.u-szeged.hu}
\urladdr{http://www.math.u-szeged.hu/\textasciitilde{}czedli/}
\address{University of Szeged\\ Bolyai Institute\\Szeged,
Aradi v\'ertan\'uk tere 1\\ Hungary 6720}

\author[L.\,L.\ Stach\'o]{L\'aszl\'o L.\ Stach\'o}
\email{stacho@math.u-szeged.hu}
\urladdr{http://www.math.u-szeged.hu/\textasciitilde{}stacho/}
\address{University of Szeged\\ Bolyai Institute\\Szeged,
Aradi v\'ertan\'uk tere 1\\ Hungary 6720}

\thanks{This research was supported by
NFSR of Hungary (OTKA), grant number K 115518}

\begin{abstract} After surveying some known properties of compact convex sets in the plane, 
we give a two rigorous proofs of the general feeling that supporting lines can be \emph{slide-turned} slowly and continuously. Targeting a wide readership, our treatment is elementary on purpose.
\end{abstract}

\subjclass {Primary 52C99, secondary 52A01}


\keywords{Supporting line, convex function, convex set}

\date{\red{\datum}}

\maketitle


\section{Motivation}
Nowadays, there is a growing interest in the combinatorial properties of convex sets, usually, in compact convex sets. A large part of the papers belonging to this field go back to Erd\H os and Szekeres~\cite{erdosszekeres}; see, for example,
Dobbins, Holmsen, and Hubard~\cite{dobbinsatal2014} and \cite{dobbinsatal2015},  Pach and T\'oth~\cite{pachtoth1998} and \cite{pachtoth2000}, and their references. Recently, besides combinatorists and geometers, algebraists are also interested in compact convex sets; see, for example, Adaricheva~\cite{adaricarousel}, Adaricheva and Bolat~\cite{kabolat}, 
Adaricheva and Nation~\cite{kirajbbooksection}, Cz\'edli~\cite{czgcircles}, \cite{czgabp}, and \cite{czgcharcirc}, Cz\'edli and Kincses~\cite{czgkj}, and Richter and Rogers~\cite{richterrogers}. The interest of algebraists is explained by the fact that \emph{antimatroids}, introduced by 
Korte and Lov\'asz~\cite{kortelovasz81} and \cite{kortelovasz83}, and the dual concept of \emph{abstract convex geometries}, introduced by Edelman and Jamison~\cite{edeljam}, have close connections to lattice theory. These connections are surveyed in Adaricheva and Cz\'edli~\cite{kaczg}, Adaricheva and Nation~\cite{kirajbbooksection},  Cz\'edli~\cite{czgcoord}, and Monjardet~\cite{monjardet}. Finally, there are other types of combinatorial investigations of convex sets; the most recent is, perhaps, Novick~\cite{novick}.

One of the most important concepts related to planar convex sets is that of \emph{supporting lines}. Most of the papers mentioned above rely, explicitly or implicitly, on the properties of these lines. We guess that not only the experts of advanced analysis of convex sets and functions are interested in the above papers; at least, this is surely true in case of the first author of the present paper. However, it is quite difficult to explain to or understand by all the interested readers 
in a short, easy-to-follow, but rigorous way that why one of the most useful  property of compact convex sets holds. This property, which seems to be  absent in the literature, will be formulated in Theorem~\ref{thmmain}. This theorem is the ``note'' occurring in the title.

This motivates the aim of this short paper: even if Theorem~\ref{thmmain} could be proved in a shorter way by using advanced tools of Analysis and even if it states what is expected by geometric intuition, we are going to give a rigorous proof for it. Actually, we give two different proofs. We believe that if other statements for planar compact convex sets like \eqref{eqtxtsrctspthm} deserve proofs that are easy to reference, then so does this theorem. 
Note that Cz\'edli~\cite{czgcharcirc} exemplifies why the present paper is expected to be useful in further research: while the first version,  arXiv:1611.09331v1, of \cite{czgcharcirc} spends a dozen of pages on properties of supporting lines, its second version needs only few lines and a reference to the present paper.   Also, we 
exemplify the use of Theorem~\ref{thmmain} by an easy corollary, which is a well known but we have not found a rigorous proof for it.

\section{A short survey}
A \emph{compact} subset of the plane $\preal$ is a  topologically closed bounded subset.  The \emph{boundary} of $H$ will be denoted by $\bnd H$.   A subset $H$ of $\preal$ is \emph{convex}, if for any two points $X,Y\in\preal$, the closed line segment $[X,Y]$ is a subset of $H$. 
In this section, $H$ will stand for a compact convex set. Even if this is not always repeated, we always assume that a convex set is nonempty.
Each line $\lne$ gives rise to two \emph{closed halfplanes}; their intersection is $\lne$. Usually, unless otherwise is stated explicitly,  we assume that $\lne$ is a \emph{directed line}; then we can speak of the \emph{left and right halfplanes} determined by $\lne$. Points or set in the left halfplane are \emph{on the left} of $\lne$; being on the right is defined analogously. If $H$ is on the left of $\lne$ such that $H\cap\lne=\emptyset$, then $H$ is \emph{strictly on the left} of $\lne$.
The \emph{direction} of a directed line $\lne$ will be denoted by $\dir \lne\in[-\pi,\pi)$. 
It is understood modulo $2\pi$, whence we  could also consider $\dir\lne$ an element of $[0,2\pi)$. Furthermore, denoting the unit circle $\set{\pair x y: x^2+y^2=1}$ by $\ucirc$, we will often say that $\dir\lne\in \ucirc$. 
Following the convention of Yaglom and Boltyanski$\breve\i$~\cite{yaglomboltyanskii}, if $H$ is on the left of $\lne$ and $\lne\cap H\neq\emptyset$, then $\lne$ is a \emph{supporting line} of $H$. Clearly, for a supporting line $\lne$ of $H$, $\lne\cap H=\lne\cap\bnd H\neq\emptyset$.
We know from Yaglom and Boltyanski$\breve\i$~\cite[page 8]{yaglomboltyanskii} that parallel to each line $\lne$, a compact convex set with nonempty interior has  exactly two supporting lines.
Hence, without any stipulation on the interior,
\begin{equation}
\parbox{7cm}{for every $\alpha\in\ucirc$, a compact convex set has exactly one supporting line of direction $\alpha$.}
\label{eqtxtprhzMs}
\end{equation} 
Note at this point that, by definition, a \emph{curve} is the range $\range g$ of a continuous function $g$ from an interval $I$ of positive length to $\real^n$ for some $n\in\set{2,3,4,\dots}$. If $x_1\neq x_2\Rightarrow g(x_1)\neq g(x_2)$ except possibly for the endpoints of $I$, then $\range g$ is a \emph{simple curve}. 
A \emph{Jordan curve} is a homeomorphic \emph{planar}  image of a circle of nonzero radius, that is, a Jordan curve   is a \emph{simple closed curve in the plane}. A curve is \emph{rectifiable} if the lengths of its inscribed polygons form a bounded subset of $\real$.  The  following statement is known, say, from Latecki, Rosenfeld, and  Silverman~\cite[Thm.\ 32]{lateckiatal} and Topogonov~\cite[page 15]{topogonov}; see also \cite{wikialeph0}.
\begin{equation}
\parbox{7.6cm}{For a compact convex $H\subseteq \preal$ with nonempty interior, $\bnd H$ is a rectifiable Jordan curve.}
\label{eqtxtrectcont}
\end{equation}

For $P\in \bnd H$, there are  two possibilities; see, for example, Yaglom and Boltyanski$\breve\i${} \cite[page 12]{yaglomboltyanskii}. First, if there is exactly one supporting line through $P$, 
\begin{equation}
\text{then $P$ is a \emph{regular point} of $\bnd H$ and the  curve $\bnd H$ is  \emph{smooth} at $P$.}
\label{eqtxtmkrSm}
\end{equation}
Second, if there are at least two distinct supporting  lines $\lne_1$ and $\lne_2$ through $P$, then  $P$ is a \emph{corner} of $\bnd H$ (or of $H$). 
In both cases, a supporting line $\lne$ containing $P$ is called the  \emph{last semitangent} of $H$ through $P$ if for every small positive $\epsilon$,  there is an $\epsilon'\in(0,\epsilon)$ such that the line obtained from $\lne$ by rotating it around $P$ \emph{forward} (that is, counterclockwise) by $\epsilon'$ degree is not a supporting line. The \emph{first semitangent} is defined similarly. The first and the last semitangents coincide iff $P\in\bnd H$ is a regular point. For $P\in \bnd H$, 
\begin{equation}
\parbox{9.5cm}{$\flne P$ and $\llne P$ will denote the first semitangent and the last semitangent through $P$, respectively. When they coincide,  $\fll P:=\flne P=\llne P$ will stand for the \emph{tangent line} through $P$.}
\label{eqtxtfllllnE}
\end{equation}
Let us emphasize that no matter if $P\in \bnd H$ is a regular point or a vertex,  
\begin{equation}
\parbox{9cm}{there exists a supporting line through $P$; in particular, both $\flne P$ and $\llne P$ exist and they are uniquely determined.}
\label{eqtxtsupplineateachpoint}
\end{equation}
Besides Yaglom and Boltyanski$\breve\i$~\cite{yaglomboltyanskii}, this folkloric fact is also included, say, in  Boyd and Vanderberghe \cite[page 51]{boyd}. We note but will not use the fact that every line separating $P$ and the interior of $H$ is a supporting line through $P$. As an illustration for \eqref{eqtxtsupplineateachpoint}, some supporting lines of $H$ are given in Figure~\ref{figglide}. If $\lne_i$ is the supporting line denoted by $i$ in the figure, then $\lne_1=\lne_{P_1}$ is a tangent line,
$\flne{P_2}=\lne_2$ is the first semitangent through $P_2$, and $\llne{P_2}=\lne_4$ is the last semitangent through the same point.
We know from, say,  Borwein and Vanderwerff~\cite[2.2.15 in page 42]{borweinvandervefff},   Yaglom and Boltyanski$\breve\i$~\cite[page 110]{yaglomboltyanskii}, or even from \cite{wikialeph0}, that the boundary $\bnd H$ of a compact convex set $H\subseteq\preal$ can have $\aleph_0$ many corners. This possibility, which is not so easy to imagine, also justifies that we are going to give a rigorous proof for our theorem.
Next, restricting ourselves to the compact case and to the plane, we recall the \emph{strict separation theorem} as follows.
\begin{equation}
\parbox{8cm}{If $H_1,H_2\subseteq \preal$ are \emph{disjoint} compact convex set, then there exists a directed line $\lne$ such that $H_1$ is strictly on the left and $H_2$ is strictly on the right of $\lne$.}
\label{eqtxtsrctspthm}
\end{equation}
This result follows, for example, from Subsection 2.5.1 in 
Boyd and Vandenberghe~\cite{boyd} plus the fact that the \emph{distance} $\dist{H_1}{H_2}$ of $H_1$ and $H_2$ is positive in this case.

\begin{figure}[ht] 
\centerline
{\includegraphics[scale=1.0]{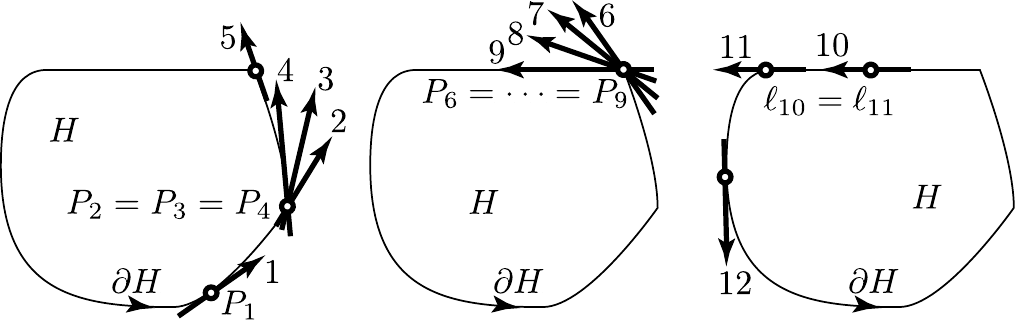}}
\caption{Supporting lines
\label{figglide}}
\end{figure}%

\section{A note and its corollary} Given a compact convex set $H$,  visual intuition tells us that any supporting line can be
continuously transformed to any other supporting line. We think of this transformation as a \emph{slow}, \emph{continuous} progression in time. For example, in Figure~\ref{figglide}, $\lne_{i+1}$ comes, after some time, later than $\lne_i$, for $i\in{1,\dots,11}$. While continuity makes a well-known mathematical sense, a comment on slowness is appropriate here. By \emph{slowness} we shall mean rectifiability, because this is what guarantees that running the process with a constant speed, it will terminate.  Therefore, since rectifiability is an adjective of curves, we are going to associate a simple closed rectifiable curve with $H$ such that the progression is described by moving along this curve forward. 
The only problem with this initial idea is that, say, $\lne_{11}$ cannot follow $\lne_{10}$, because they are the same supporting lines. Therefore, we consider pointed supporting lines. A \emph{pointed supporting line} of $H$ is a pair $\pair P\lne$ such that $P\in\bnd H$ and $\lne$ is a supporting line of $H$ through $P$. The transition from $\lne_i$ to $\lne_{i+1}$ will be called \emph{slide-turning}. Of course, the $\pair{P_i}{\lne_i}$, for $i\in\set{1,\dots,12}$, represent only twelve snapshots of a continuous progression.
In order to capture the progression mathematically, note that  each pointed supporting line  $\pair P\lne$ of $H$
is determined uniquely by the point $\pair P{\dir\lne}\in \real^4$. To be more precise, define the following~\emph{cylinder}
\begin{equation}
\cylin := \preal\times\ucirc =    \set{\tuple{x,y,z,t}\in\real^4: z^2+t^2=1} \subseteq \real^4.
\label{eqtxtcyldR}
\end{equation}
As the crucial concept of this section, the \emph{slide curve} of $H$ is  
\begin{equation}
\slcr H:=\{\pair P{\dir\lne} : \pair P\lne 
\text{ is a pointed supporting line of } H\};
\label{eqtxtsldcrV}
\end{equation}
it is a subset of $\cylin$. Although $\slcr H$ looks only a set at present, it will soon turn out that it is a curve. Actually, the main result of the paper says the following.

\begin{theorem}\label{thmmain} For every nonempty compact convex set $H\subseteq \preal$,
$\slcr H$ is a \emph{rectifiable} simple closed curve.
\end{theorem}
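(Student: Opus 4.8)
The plan is to construct one explicit parametrization of $\slcr H$ by a single real parameter and then read off all four properties from it. First I would dispose of the degenerate cases where $H$ has empty interior: if $H=\{P\}$ is a singleton then every directed line through $P$ supports $H$, so $\slcr H=\{P\}\times\ucirc$ is literally a circle, and if $H$ is a segment then a direct inspection produces a closed curve built from two ``horizontal'' runs (the two directions parallel to the segment, along which the contact point sweeps the whole segment) joined by two ``vertical'' sweeps of angular length $\pi$ at the two endpoints. From now on assume $H$ has nonempty interior, so that by \eqref{eqtxtrectcont} the boundary $\bnd H$ is a rectifiable Jordan curve; fix its counterclockwise arc-length parametrization $\gamma\colon[0,L]\to\bnd H$ with $L=\peri{H}$, injective on $[0,L)$ and $\gamma(0)=\gamma(L)$, where the base point may be taken regular since $\bnd H$ has at most $\aleph_0$ corners as recalled above. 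For $P\in\bnd H$ write $d^-(P)=\dir{\flne{P}}$ and $d^+(P)=\dir{\llne{P}}$, which exist and are unique by \eqref{eqtxtsupplineateachpoint} and satisfy $d^-(P)=d^+(P)$ exactly at regular points. The order fact I would establish first is \emph{monotonicity}: after choosing a continuous real lift of the directions, the semitangent directions are nondecreasing along $\gamma$, with total increase (corner jumps plus continuous increase on the smooth part) equal to $2\pi$ over the full loop.

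Next I would build the parameter. Let $T=L+\sum_{P\text{ a corner}}\bigl(d^+(P)-d^-(P)\bigr)$; since the corner jumps are disjoint portions of the total turning $2\pi$, we get $T\le L+2\pi<\infty$. Using the strictly increasing jump function $w(s)=s+\sum_{s_j<s}\bigl(d^+(\gamma(s_j))-d^-(\gamma(s_j))\bigr)$, define $\Gamma\colon[0,T]\to\cylin$, $\Gamma(\tau)=\pair{\gamma(S(\tau))}{A(\tau)}$, by traversing $\bnd H$ at unit speed (at a regular $s$, $\Gamma(w(s))=\pair{\gamma(s)}{\dir{\fll{\gamma(s)}}}$) and, at each corner $\gamma(s_i)$, pausing the position $S$ at $\gamma(s_i)$ while the direction $A$ sweeps the arc $[d^-(\gamma(s_i)),d^+(\gamma(s_i))]$ across the jump interval $\bigl[w(s_i),w(s_i)+d^+(\gamma(s_i))-d^-(\gamma(s_i))\bigr]$. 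Here $S$ is nondecreasing and $A$ is nondecreasing. Because every supporting direction at $P$ lies in $[d^-(P),d^+(P)]$, as recorded in \eqref{eqtxtfllllnE}, the range of $\Gamma$ is exactly $\slcr H$.

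The heart of the proof, and the step I expect to be the main obstacle, is the \emph{continuity} of $\Gamma$. Inside each smooth arc and each corner sweep it is clear, so everything reduces to two one-sided limit statements at a corner $P$: the tangent directions of regular points approaching $P$ from below must tend to $d^-(P)$, and from above to $d^+(P)$. I would prove these by showing that a limit of supporting lines through points $Q_n\to P$ is again a supporting line through $P$ (compactness of $H$ together with \eqref{eqtxtprhzMs}) and then identifying this limit line with the correct semitangent via its extremal characterization; the one-sided limits exist beforehand by monotonicity. A separate subtlety, caused by the possible $\aleph_0$ corners, is continuity at a parameter that is an accumulation point of jump intervals; here I would use that only finitely many corners have jump exceeding a given $\varepsilon$ (their jumps sum to at most $2\pi$) together with the continuity of $\gamma$. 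Granting continuity, injectivity of $\Gamma$ on $[0,T)$ follows from injectivity of $\gamma$ on $[0,L)$ (distinct positions) and strict monotonicity of $A$ within a single corner sweep (equal positions), while $\Gamma(0)=\Gamma(T)$ holds because the base point is regular and $A$ has risen by exactly $2\pi$. Thus $\Gamma$ descends to a continuous bijection from the circle $[0,T]/(0\sim T)$ onto $\slcr H$, and since the circle is compact and $\cylin\subseteq\real^4$ is Hausdorff, this bijection is a homeomorphism; hence $\slcr H$ is a simple closed curve.

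Finally, rectifiability is cheap once $\Gamma$ is available. For any partition $0=\tau_0<\dots<\tau_n=T$, the triangle inequality in $\real^4=\preal\times\real^2$ bounds $\sum_k\lVert\Gamma(\tau_k)-\Gamma(\tau_{k-1})\rVert$ by the sum of the position chords plus the sum of the direction chords. The positions $\gamma(S(\tau_k))$ occur in their cyclic order along $\bnd H$, so the position chords sum to at most $\peri{H}=L$; the direction coordinates $A(\tau_k)$ increase monotonically by a total of $2\pi$, so the direction chords sum to at most $2\pi$. Hence every inscribed polygon of $\slcr H$ has length at most $L+2\pi$, and $\slcr H$ is rectifiable, which completes the plan.
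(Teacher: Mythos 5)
Your plan is sound, and it takes a genuinely different route from both of the paper's proofs. The paper never constructs a global monotone parametrization of $\slcr H$: its first proof works locally, writing a piece of $\bnd H$ as the graph of a convex function $f$ and showing that the map $\pair x d\mapsto x+d$ on the subdifferential set $D$ is bi-Lipschitzian onto an interval, so that each local piece $\asli B H$ is a bi-Lipschitzian image of a segment, and then patches finitely many such pieces together by compactness; its second proof is global but indirect, exhibiting a bijection between $\slcr H$ and the boundary of the outer parallel body $\paraH=\set{P:\dist P H\leq 1}$ that is Lipschitz in the small in both directions, so that rectifiability is inherited from \eqref{eqtxtrectcont} applied to $\paraH$. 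Both devices let the paper obtain rectifiability from a known rectifiable curve via bi-Lipschitz transport, \eqref{eqtxtcmptWblPsH}, without ever proving that the supporting direction is globally monotone along $\bnd H$. Your approach instead builds the ``slide-turning'' parametrization explicitly (arc length plus angular sweeps at corners), which is closer to the intuition the paper describes in Section 3 and yields the clean explicit bound $\peri H+2\pi$ on inscribed polygons; the price is that you must prove more structural facts up front.

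The one load-bearing step you assert but do not prove is the monotonicity lemma: that after a continuous real lift, the supporting directions are nondecreasing along the counterclockwise boundary with total increase exactly $2\pi$. Everything else in your plan --- finiteness of $T$, countability and summability of the corner jumps, injectivity, the closing-up condition $\Gamma(0)=\Gamma(T)$, and the $2\pi$ bound on the direction chords --- rests on it, so it needs a genuine argument (for instance via \eqref{eqtxtprhzMs}: the direction-to-contact-point correspondence is a monotone degree-one circle map, or via the convexity of the normal cone at each boundary point). Relatedly, you should justify that the set of supporting directions at a fixed $P$ is the full closed arc $[d^-(P),d^+(P)]$ rather than just containing its endpoints; \eqref{eqtxtfllllnE} only names the two extreme semitangents, and surjectivity of $\Gamma$ onto $\slcr H$ during a corner sweep needs the whole arc. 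Your continuity argument (a limit of supporting lines at $Q_n\to P$ is a supporting line at $P$, then squeeze against the one-sided monotone limits) is correct, and your treatment of accumulating corners and of the rectifiability estimate is fine. With the monotonicity lemma written out, this would be a complete third proof.
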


In order to exemplify the usefulness of this theorem, we state a  corollary. Although it is well known, we have not found a rigorous proof for it.

\begin{corollary}\label{corolfourtl} If $H_1,H_2\subseteq \preal$ are disjoint compact convex sets with nonempty interiors, then they have exactly four non-directed supporting lines in common.
\end{corollary}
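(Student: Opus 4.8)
The plan is to encode supporting lines by their directions and use Theorem~\ref{thmmain} (through the continuity it guarantees) to reduce the count to two zero-counting problems on the circle $\ucirc$: one for the two \emph{external} common tangents (both sets on the same side) and one for the two \emph{internal} common tangents (the sets on opposite sides). For $\alpha\in\ucirc$ let $n(\alpha)$ be the unit normal pointing to the right of the directed line of direction $\alpha$; by \eqref{eqtxtprhzMs} each $H_i$ has a unique supporting line $L_i(\alpha)$ of direction $\alpha$, which I encode by the signed distance $d_i(\alpha):=\max_{X\in H_i}\langle X,n(\alpha)\rangle$ (the support function). A non-directed line touching both sets with both of them on the same side is, when suitably directed, a common directed supporting line, i.e.\ $L_1(\alpha)=L_2(\alpha)$, which holds exactly when $g(\alpha):=d_1(\alpha)-d_2(\alpha)=0$; a non-directed line separating the two sets and touching both satisfies $L_1(\alpha)=L_2(\alpha+\pi)$, i.e.\ $h(\alpha):=d_1(\alpha)+d_2(\alpha+\pi)=0$. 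Each zero of $g$ and each zero of $h$ over the full circle yields one non-directed common tangent, and the two families are disjoint (same side versus opposite sides), so it suffices to show that $g$ and $h$ each have exactly two zeros. For existence, the $d_i$, hence $g$ and $h$, are continuous on $\ucirc$; fixing by \eqref{eqtxtsrctspthm} a direction $\beta$ with $n(\beta)$ pointing from $H_1$ to $H_2$, so that $\max_{H_1}\langle\cdot,n(\beta)\rangle<\min_{H_2}\langle\cdot,n(\beta)\rangle$, a short computation gives $g(\beta)<0<g(\beta+\pi)$ and $h(\beta)<0<h(\beta+\pi)$, whence by the intermediate value theorem each of $g,h$ has a zero in each open semicircle, i.e.\ at least two zeros apiece.

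For the internal tangents I observe that $h(\alpha)=\max_{H_1}\langle\cdot,n(\alpha)\rangle+\max_{-H_2}\langle\cdot,n(\alpha)\rangle$ is the support function of the Minkowski sum $K:=H_1\oplus(-H_2)$, where $-H_2=\set{-X:X\in H_2}$. Since $H_1\cap H_2=\emptyset$ we have $0\notin K$, and since $H_1$ has nonempty interior so does $K$. Hence $\set{\alpha:h(\alpha)\le 0}$ is exactly the set of normals of closed halfplanes through the origin containing $K$; as $K$ is compact convex with nonempty interior and $0\notin K$, this set is a proper closed arc of $\ucirc$ (nonempty because the origin can be strictly separated from $K$, proper because some direction points into $K$). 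Its two endpoints are precisely the zeros of $h$, so $h$ has exactly two zeros, and there are exactly two internal common tangents.

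For the external tangents I pass to the convex hull $C:=\phullo(H_1\cup H_2)$, which is compact convex with nonempty interior, so by \eqref{eqtxtrectcont} its boundary $\bnd C$ is a Jordan curve. A directed supporting line of $C$ is a common directed supporting line of $H_1$ and $H_2$ exactly when its contact face meets both sets, that is, when it is a \emph{bridge} of $C$; thus the external common tangents are exactly the lines carrying the bridges. The key geometric fact is that $\bnd C\cap\bnd{H_1}$ and $\bnd C\cap\bnd{H_2}$ are each a single arc: the two points where the hull leaves $H_i$ split $\bnd{H_i}$ into the arc facing away from the other set, which remains on $\bnd C$, and the arc facing the other set, which is swallowed into the interior of $C$. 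Consequently $\bnd C$ is the concatenation of one $H_1$-arc, a bridge, one $H_2$-arc and a bridge, so $C$ has exactly two bridges and there are exactly two external common tangents; in the language of $g$ this says that $\set{\alpha:g(\alpha)\ge 0}$ is an arc, whence $g$ changes sign exactly twice.

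Combining the two internal and two external tangents gives exactly four common non-directed supporting lines. The main obstacle is the upper bound, not existence. The internal count is clean because $h$ is genuinely a support function (of $K$), so the elementary principle ``a convex body with nonempty interior has exactly two supporting lines through an exterior point'' applies verbatim. By contrast $g=d_1-d_2$ is a \emph{difference} of support functions and is not itself a support function, so the external count forces me instead to establish the connectedness of the contact arcs $\bnd C\cap\bnd{H_i}$. Proving this connectedness rigorously—ruling out that $\bnd C$ re-touches $H_1$ after once leaving it—is the delicate step, and it is precisely here that disjointness and the nonempty interiors of $H_1,H_2$ enter.
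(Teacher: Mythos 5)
Your route is genuinely different from the paper's. The paper proves the corollary by actually using Theorem~\ref{thmmain}: it parameterizes the rectifiable simple closed curve $\slcr{H_1}$ by arc length, lets the pointed supporting line of $H_1$ slide-turn once around, observes that $\dist{\lne(t)}{H_2}$ and the side on which $H_2$ lies vary continuously, and reads off the four common tangents as the four transition instants (last time $H_2$ is on the right, first time it is on the left, and the two symmetric instants). You instead work with support functions, splitting the count into internal tangents (zeros of $h=d_1(\cdot)+d_2(\cdot+\pi)$) and external tangents (zeros of $g=d_1-d_2$). Your internal half is sound and arguably cleaner than anything in the paper: $h$ is the support function of $K=H_1\oplus(-H_2)$, $0\notin K$ by disjointness, and the set $\set{\alpha: h(\alpha)\le 0}$ is the trace on $\ucirc$ of the polar cone of $K$, a proper closed arc whose two endpoints are the only zeros of $h$ (for the ``only'' part you should add one line: since $\dist{0}{K}>0$, a touching point of a supporting line through the origin is bounded away from the origin, so rotating the normal to one side strictly increases $h$; hence no zero of $h$ is interior to the arc). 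Note also that your argument, unlike the paper's, makes no real use of Theorem~\ref{thmmain}: continuity of support functions is all you invoke, so even when completed it would not illustrate the theorem the corollary is meant to showcase.

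The genuine gap is exactly where you flag it: the external count. That $\bnd{\phullo(H_1\cup H_2)}$ decomposes as (one arc of $\bnd{H_1}$) $+$ (bridge) $+$ (one arc of $\bnd{H_2}$) $+$ (bridge) --- equivalently, that $\set{\alpha: g(\alpha)\ge 0}$ is a single arc of $\ucirc$ --- is precisely the upper bound ``at most two external common tangents,'' and it is not implied by anything you have established. Since $g$ is a \emph{difference} of support functions, no convexity or polarity argument applies to it directly, and a continuous function on the circle that is negative at $\beta$ and positive at $\beta+\pi$ can a priori have any even number of sign changes; ruling out a third and fourth external tangent is the entire content of that half of the corollary. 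As written, your argument proves ``exactly two internal and at least two external,'' i.e.\ ``at least four,'' not ``exactly four.'' To close it you would either have to prove the contact-arc connectedness you describe (for instance: if $X,Y\in\bnd C\cap \bnd{H_1}$ with $C:=\phullo(H_1\cup H_2)$, and some point of $\bnd C$ strictly between them along the $H_1$-side belonged to $\bnd{H_2}$ or to a bridge, derive a contradiction with the separating line from \eqref{eqtxtsrctspthm}), or abandon the split and run a single monotone rotation argument around $H_1$ --- which is what the paper's proof does, using the parameterization supplied by Theorem~\ref{thmmain} to make ``rotating once around'' meaningful.
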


The stipulation on the interior above can be relaxed but then we have to speak of \emph{directed} supporting lines.

\begin{figure}[ht] 
\centerline
{\includegraphics[scale=1.0]{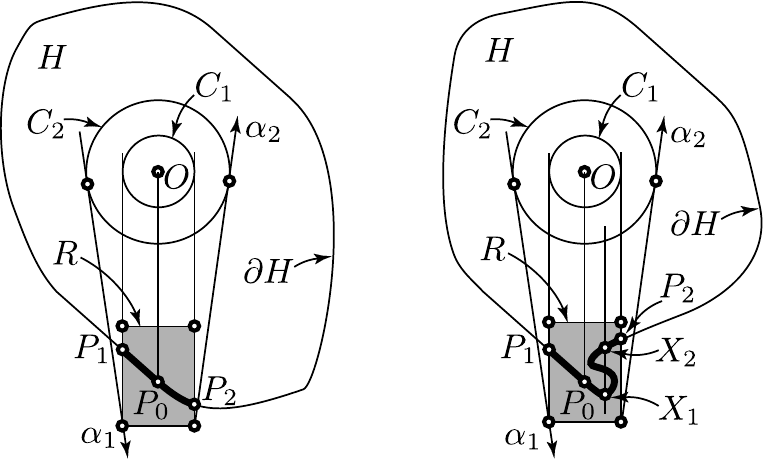}}
\caption{Reducing the problem to functions
\label{figrdtfncts}}
\end{figure}%

\section{Proofs}

\begin{proof}[First proof of Theorem~\ref{thmmain}]
We can assume that the interior of $H$ is nonempty, because otherwise $H$ is a line segment, possibly a singleton segment, and the statement trivially holds. 
In order to reduce the task to functions rather than convex sets, let $P_0$ be an arbitrary point of $\bnd H$. Pick a point $O$ in the interior of $H$, and choose a coordinate system such that both $P_0$ and $O$ are on the $y$-axis and $O$ is above $P_0$; see on the left of Figure~\ref{figrdtfncts}. For a positive $\uepsilon$, let $C_1$ and $C_2$ be the circles of radii $\uepsilon$ and $2\uepsilon$ around $O$; we can assume that $\uepsilon$ is so small that $C_2$ is in the interior of $H$. Let $A$  be the intersection of $\bnd H$ and  the closed strip $S$ between the two vertical tangent lines of $C_1$. In the figure, $A$ is the thick arc of $\bnd H$ between $P_1$ and $P_2$. Let 
\begin{equation}
\parbox{8.7cm}{$\asli A H:=\set{\pair P{\dir\lne}: P\in A,\,\,\,\pair P{\dir\lne}\in \slcr H}$, and similarly for future other arcs of $\bnd H$.}
\label{eqhsljH}
\end{equation}
Since the distance of $O$ and the complementer set of $H$ is positive, we can assume that $\uepsilon$ is so small that the grey-filled rectangle containing $A$ in the figure is strictly below $C_2$. (We have some freedom to choose the upper and lower edges of this rectangle.)
Let $\alpha_1,\alpha_2\in\ucirc$ be the directions of the external common supporting lines of $C_2$ and this rectangle, see the figure. Note that if we consider $\ucirc$ the interval $[-\pi,\pi)$, then $\alpha_1=-\alpha_2$.   The presence of $C_2$ within $H$ guarantees the second half of the following observation:
\begin{equation}
\parbox{7.7cm}{$0<\alpha_2<\pi$ and for every supporting line $\lne$ of $H$ that contains a point of $A$, $-\alpha_2\leq \dir\lne\leq \alpha_2$.}
\end{equation}
We claim that 
\begin{equation}
\text{$A$ is the graph of a convex function $f\colon [-\uepsilon,\uepsilon]\to \real$.}
\label{eqtxtfsdzbnsT}
\end{equation}
By the convexity of $H$ and \eqref{eqtxtrectcont}, every vertical line in the strip $S$ intersects $A$. Suppose, for a contradiction, that $U$ is not the graph of a function.  Then a vertical line in $S$ intersects $A$ in at least two distinct points, $X_1$ and $X_2$. Let, say, $X_2$ be above $X_1$;  see on the right of the figure. Then $X_2$ is in the interior of the convex hull of $\set{X_1} \cup C_2$, whereby it is in the interior rather than on the boundary of $H$. 
This contradiction shows that  $f$ is a function. It is convex, since so is $H$. This  proves \eqref{eqtxtfsdzbnsT}. 
Clearly, the same consideration shows that 
\begin{equation}
\text{each ray starting from $O$ intersects $\bnd H$ exactly once.}
\label{eqtxtchrYsntrXnC}
\end{equation}

Next, recall from, say,  Niculescu and Persson \cite[page 25]{niculescup}  that for a real-valued function $f\colon \real \to \real$ and $x_0$ in the interior of its domain, the left derivative $\lim_{x\to x_0-}(f(x)-f(x_0))/(x-x_0)$ and the right derivative of $f$ at $x_0$ are denoted by $\lder f(x_0)$ and $\rder f(x_0)$, respectively. By a theorem of Stolz~\cite{stolz}, see also  Niculescu and Persson~\cite[Theorem 1.3.3]{niculescup}, if $f$ is convex in the open interval $(-u,u)$, then 
\begin{equation}
\parbox{10cm}{for all $x,x_1,x_2\in (-\uepsilon,\uepsilon)$, both $\lder f(x)$ and $\rder f(x)$ exist, \\$\lder f(x)\leq \rder f(x)$, and $x_1<x_2$ implies that $\rder f(x_1)\leq \lder f(x_2)$.}
\label{eqtxtdmzBmQk}
\end{equation}
Recall that a function $g$ from a subset of $\real^k$ to $\real^n$  is a \emph{Lipschitzian} if there exists a positive constant $L$ such that $\dist{g(x)}{g(x')}\leq L\cdot \dist{x}{x'}$ holds for all $x$ and $x'$ in the domain of $g$. Since $f$ is convex, we know from 
Rockafellar~\cite[Theorems 10.1, 10.4, and 24.1]{rockafellar} 
that 
\begin{equation}
\parbox{7.5cm}{in $(-u,u)$, $f$ is Lipschitzian, $\lder f$ is continuous from the left, and $\rder f$ is continuous from the right.}
\label{eqtxtlfrGhTcns}
\end{equation}
Note that if a function is Lipschitzian in an interval, then it is uniformly continuous there. 
From now on, we consider $f$ only in the  open interval $(-\uepsilon,\uepsilon)$ and  we fix a positive $v\in (0,\uepsilon)$,  For  $x_0\in (-\uepsilon,\uepsilon)$, the \emph{subdifferential} is defined as the interval
\begin{equation}
\begin{aligned}
\sder f(x_0)&=\set{d\in\real: \forall x\in (-
\uepsilon,\uepsilon),\,\, f(x)\geq f(x_0)+d(x-x_0)} \cr
& = [\lder f(x_0), \rder f(x_0)];
\end{aligned}
\label{eqdFsubDiFf}
\end{equation}
see Niculescu and Persson \cite[Section 1.5]{niculescup}. 
As a consequence of \eqref{eqtxtdmzBmQk}, the subdifferential is a \emph{dissipative}  set-valued function, that is,
\begin{equation}
\parbox{7.5cm}{for $x_1,x_2\in (-\uepsilon,\uepsilon)$, if $x_1<x_2$, $d_1\in \sder f(x_1)$, and $d_2\in \sder f(x_2)$, then $d_1\leq d_2$.}
\label{eqtxtdSSptv}
\end{equation} 
Consider the set 
\begin{equation}
D:=\set{\pair x d:  x\in [-v,v]\text{ and }d\in\sder f(x)}\subseteq \preal
\label{eqDstdFnTz}
\end{equation}
with the (strict) lexicographic ordering
\begin{equation} \pair {x_1}{d_1}\lexless \pair {x_2}{d_2}\defiff 
(x_1<x_2, \text{ or } x_1=x_2\text{ and }d_1<d_2).
\label{eqlexDeFn}
\end{equation}
We define a function
\begin{equation}
t\colon D\to \real\,\,\text{ by }\,\,t(x, d)=  x+d.
\label{eqtxtttDfCsfbt}
\end{equation}
Note that  $t(x,d)$ is a short form of  $t(\pair x d)$. Recall that 
the \emph{Manhattan distance} $\mdist{\pair {x_1}{d_1}}{\pair {x_2}{d_2}}$ of $\pair {x_1}{d_1}$  and $\pair {x_2}{d_2}$ in $\preal$ is defined as $|x_1-x_2|+|d_1-d_2|$. It has the usual properties of a distance function.  It follows from \eqref{eqtxtdmzBmQk} that, for  $\pair {x_1}{d_1}$ and $\pair {x_2}{d_2}$ in $D$ (rather than in $\preal$),
\begin{equation}
\text{if }\pair {x_1}{d_1} \lexleq  \pair {x_2}{d_2},\text{ then }
\mdist{\pair {x_1}{d_1}}{\pair {x_2}{d_2}}= t(x_2,d_2)-t(x_1,d_1);
\label{eqtxtFThNdhW}
\end{equation}
that is, for points of $D$, the Manhattan distance is derived from the function $t$. Let $\dist{\pair {x_1}{d_1}}{\pair {x_2}{d_2}}$ stand for the Euclidean distance 
$((x_1-x_2)^2+(d_1-d_2)^2))^{1/2}$; in $\real^4$, it is understood analogously. 
For the sake of a later reference, we note in advance that for $x^{(i)},d^{(i)}\in\preal$, the \emph{Manhattan distance} in $\real^4$ is understood as 
\begin{equation}
\mdist {\pair{x^{(1)}}{d^{(1)}}} {\pair{x^{(2)}}{d^{(2)}}}:=
\dist {x^{(1)}}{x^{(2)}} + \dist {d^{(1)}}{d^{(2)}}. 
\label{eqrngmnHdSt}
\end{equation}
It is well known and easy to see that, for all $\pair {x_1}{d_1}, \pair {x_1}{d_1}$ in $\preal$, and even  $\real^4$ if $x_1, x_2,d_1,d_2\in\preal$,
\begin{equation}
\dist{\pair {x_1}{d_1}}{\pair {x_2}{d_2}}
\leq \mdist{\pair {x_1}{d_1}}{\pair {x_2}{d_2}}\leq2\cdot\dist{\pair {x_1}{d_1}}{\pair {x_2}{d_2}}.
\label{eqdsmDsDsT}
\end{equation}
It follows from \eqref{eqtxtFThNdhW} and the second half of \eqref{eqdsmDsDsT} that $t$ is a Lipschitzian function (with Lipschitz constant $2$). Since $\mdist--$ is a distance function, \eqref{eqtxtFThNdhW} yields that $t$ is injective.  Actually, it is bijective as a $D\to\range t$ function. Thus, it has an inverse function, $t^{-1}\colon\range t\to D$, which is also bijective. In order to see that the function $t^{-1}$ is also a Lipschitzian, let $y_i=t(x_i,d_i)=x_i+d_i\in \range t$, for $i\in\set{1,2}$. Since $\dist --$ is a symmetric function, we can assume that  $\pair {x_1}{d_1}\lexleq\pair {x_2}{d_2}$. We can also assume that $d_1\leq d_2$; either because $x_1=x_2$ and then we can interchange the subscripts 1 and 2, or because $x_1<x_2$ and  \eqref{eqtxtdSSptv} applies. With these assumptions, let us compute:
\begin{align*}\dist{y_1}{y_2}&=|y_2-y_1|=|x_2+d_2-(x_1+d_1)|= |x_2-x_1+d_2-d_1|\cr
&= x_2-x_1+d_2-d_1 = |x_1-x_2|+|d_1-d_2|= \mdist{\pair {x_1}{d_1}}{\pair {x_2}{d_2}}.
\end{align*}
Hence, using the second part of \eqref{eqdsmDsDsT}, it follows that the function $t^{-1}$ is  Lipschitzian (with Lipschitz constant 2). So, we can summarize that 
\begin{equation}
\parbox{10.5cm}{$t\colon D\to\range t$ and $t^{-1}\colon \range t\to D$ are reciprocal bijections and both of them are Lipschitzian; in short, $t$ is \emph{bi-Lipschitzian}.}
\label{eqtxtsTrpRcL}
\end{equation}
Next, let $w_1=t(-v, \lder f(-v))$ and $w_2=t(v, \rder f(v))$. We claim that
\begin{equation}
\range t= [w_1,w_2].
\label{eqWhTrnGt}
\end{equation}
In order to see the easier inclusion, assume that $\pair x d\in D$. Using \eqref{eqtxtdSSptv} and \eqref{eqlexDeFn},
we obtain that $\pair{-v}{\lder f(-v)}\lexleq \pair x d \lexleq\pair{v}{\rder f(v)}$. Thus, since \eqref{eqtxtFThNdhW} yields that $t$ is order-preserving, we conclude that $w_1\leq t(x,d)\leq w_2$, that is, $\range t \subseteq [w_1,w_2]$. In order to show the converse inclusion, assume that $s\in[w_1,w_2]$. We need to find an $\pair {x_0}{d_0}\in D$ such that $s=t(x_0,d_0)$, that is, $s=x_0+d_0$. Define 
\begin{equation}
\parbox{10.3cm}{$x^-:=\sup\,\set{x: \text{there is a } d\text{ such that }\pair x d\in D\text{ and } x+d\leq s }$,\\ 
$x^+:=\inf\,\set{x: \text{there is a } d\text{ such that }\pair x d\in D\text{ and } x+d\geq s }$.}
\label{eqtxtinfsupXmp}
\end{equation}
Since $t(-v,\lder f(-v))=w_1\leq s  \leq w_2 = t(v,\rder f(v))$, the sets occurring in \eqref{eqtxtinfsupXmp} are nonempty. Hence, both $x^-$ and $x^+$ exist and we have that $x^-, x^+\in [-v, v]$. Suppose, for a contradiction, that $x^+<x^-$. Then  $x^-=3\epsilon+ x^+$ for a positive $\epsilon$.    By \eqref{eqtxtinfsupXmp}, which defines $x^-$ and $x^+$,   we can pick $\pair {x^\dagger}{d^\dagger}, \pair {x^\ddagger}{d^\ddagger}\in D$ such that 
${x^\dagger}\in (- \epsilon + x^-,  x^-]$, $t({x^\dagger},{d^\dagger}) ={x^\dagger}+{d^\dagger} \leq s$, 
${x^\ddagger}\in [x^+, \epsilon + x^+ )$,  and $t({x^\ddagger},{d^\ddagger}) ={x^\ddagger}+{d^\ddagger} \geq s$. 
In particular, ${x^\dagger} + {d^\dagger}\leq {x^\ddagger}+{d^\ddagger}$. 
However, since $x^\ddagger < x^\dagger$, the dissipative property from \eqref{eqtxtdSSptv} gives that  $d^\ddagger\leq d^\dagger$, whereby 
${x^\dagger} + {d^\dagger}\geq {x^\dagger}+{d^\ddagger}>{x^\ddagger}+{d^\ddagger}$, contradicting  ${x^\dagger} + {d^\dagger}\leq {x^\ddagger}+{d^\ddagger}$. This proves that 
$x^-\leq x^+$. 
Next, suppose for a contradiction that $x^-<x^+$. Let  $x^\ast:=(x^-+x^+)/2$, and pick a $d^\ast\in\sder f(x^\ast)$. 
Since $x^\ast + d^\ast \leq s$ would contradict the definition of $x^-$, we have that $x^\ast + d^\ast >s$, which contradicts the definition of $x^+$.  This excludes the case $x^-<x^+$. So we have that  $x^-=x^+$, and we let $x_0:= x^-=x^+$. Clearly, for all $x$ and the corresponding $d$ in the upper line of \eqref{eqtxtinfsupXmp}, $x+\lder f(x)\leq x+d\leq s$. Hence, the left continuity formulated in \eqref{eqtxtlfrGhTcns} gives that $t(x_0,\lder f(x_0))=  x_0+\lder f(x_0)=x^- +\lder f(x^-)\leq s$. Similarly,  $t(x_0,\rder f(x_0))=  x_0+\rder f(x_0)=x^+ +\rder f(x^+)\geq s$. So $x_0+\lder f(x_0)\leq s 
\leq x_0+\rder f(x_0)$, whereby \eqref{eqdFsubDiFf} gives a $d_0\in[\lder f(x_0),\rder f(x_0)]$ such that  $s=x_0+d_0=t(x_0,d_0)$. This proves \eqref{eqWhTrnGt}.

It is well known (and evident) that, with self-explanatory domains,  
\begin{equation} 
\parbox{8.5cm}{the composition of two bi-Lipschitzian functions is bi-Lipschitzian. Thus, a bi-Lipschitzian function maps a rectifiable simple curve to a rectifiable simple curve.}
\label{eqtxtcmptWblPsH}
\end{equation}
Before utilizing \eqref{eqtxtcmptWblPsH}, we need some preparations.  Let $Q_1=\pair{-v}{f(-v)}$ and $Q_2=\pair{v}{f(v)}$; they are points on the arc $A$ before and after $P_0$, respectively. Let $B$ be the sub-arc of $A$ 
(and of $\bnd H$) from $Q_1$ to $Q_2$, and note that $P_0$ is in the interior of $B$. Let $f^\ast\colon [-v,v]\to B$ be the function defined by $f^\ast(x):=\pair x{f(x)}$.
Using \eqref{eqtxtlfrGhTcns} and the relation between  the Euclidean and the Manhattan distance functions, see \eqref{eqdsmDsDsT}, it follows that $f^\ast$ is Lipschitzian. This fact implies trivially that $f^\ast$ is bi-Lipschitzian.
So is the arctangent function on $[-v,v]$. Therefore, it follows in a straightforward way from \eqref{eqdsmDsDsT} that the Cartesian (or categorical) product function
\begin{equation}
\parbox{11cm}{
$\pair {f^\ast}{\arctan}\colon D\to \asli B H$,  defined by $\pair x d\mapsto \pair {f^\ast(x)}{\arctan(d)}$,
where $\asli B H$ is defined in \eqref{eqhsljH}, is bi-Lipschitzian.}
\label{eqtxtblIpSchTzNvj}
\end{equation}

The line segment $[w_1,w_2]$ is clearly a simple rectifiable  curve. So is $D$ by \eqref{eqtxtttDfCsfbt}, \eqref{eqtxtsTrpRcL},  \eqref{eqWhTrnGt}, and \eqref{eqtxtcmptWblPsH}. Hence,  \eqref{eqtxtcmptWblPsH} and \eqref
{eqtxtblIpSchTzNvj} yield that  $\asli B H$ is a simple rectifiable curve. Finally, since $P_0\in \bnd H$ was arbitrary and since the endpoints of $B$ can be omitted from $B$, we obtain that $\bnd H$ can be covered by a set  $\set{B_i: i\in I}$ of open arcs such that the $\asli{B_i}H\subseteq \cylin$ are simple rectifiable curves. Clearly, the $\asli{B_i}H$ cover $\slcr H$.  Since $\bnd H$ is compact, we can assume that $I$ is finite. Therefore, $\slcr H$ is covered by finitely many open simple rectifiable curves. Furthermore, 
\eqref{eqtxtchrYsntrXnC} yields that each of these open curves overlaps with its neighbors.
 Thus, we conclude  the validity of Theorem~\ref{thmmain}.
\end{proof}

In the following proof, the argument leading to \eqref{eqtxtPrHsrCtfTbL} can be extracted from the more general approach of Kneser~\cite{kneser} and Stach\'o~\cite{stacho}. For the planar case and for the reader's convenience, it is more convenient to prove  \eqref{eqtxtPrHsrCtfTbL} directly.

%

\begin{proof}[Second proof of Theorem~\ref{thmmain}]
Define $\paraH:=\set{P\in\preal: \dist P H)\leq 1}$. First, we prove that $\paraH$ is a compact convex set. Let $Q$ be a limit point of $\paraH$ and suppose, for a contradiction,  that $Q\notin \paraH$. This means that  $\dist Q H=1+3\epsilon$ for a positive $\epsilon\in \real$. Take a sequence $(P_n:n\in\mathbb N)$ of points in $\paraH$ such that $\lim_{n\to\infty} P_n=P$. For each $n\in N$, pick a point $Q_n\in H$ such that $\dist{P_n}{Q_n}\leq 1$. Since $H$ is compact, the sequence $(Q_n: n\in\mathbb N)$ has a convergent subsequence. Deleting members if necessary, we can assume that $(Q_n: n\in\mathbb N)$ itself  converges to a point $Q$ of $H$. Take a sufficiently large $n\in \mathbb N$ such that $\dist P{P_n}<\epsilon$ and $\dist {Q_n}Q<\epsilon$. Then $1+3\epsilon = \dist P Q\leq 
\dist P{P_n}+ \dist{P_n}{Q_n} + \dist {Q_n}Q \leq \epsilon + 1 + \epsilon =1+2\epsilon$ is a contradiction. Hence, $\paraH$ is closed, whereby it is obviously compact. In order to show that it is convex, let $X,Y\in \paraH$ and let $\lambda\in (0,1)$; we need to show that $Z:=(1-\lambda)X+\lambda Y\in \paraH$. The containments $X \in \paraH$ and $Y \in \paraH$ are witnessed by some $X_0,Y_0\in H$ such that $\dist X{X_0}\leq 1$ and $\dist Y{Y_0}\leq 1$. 
Since $H$ is convex, $Z_0:=(1-\lambda)X_0 + \lambda Y_0\in H$. The vectors $\vec a:=X-X_0$ and $\vec b:=Y-Y_0$ are of length at most 1, and it suffices to show that so is $\vec c:=Z-Z_0$.  Since $(\vec a,\vec b)\leq ||\vec a||\cdot||\vec b||\leq 1$, we have that
\begin{align*}
(\vec c,\vec c)&=((1-\lambda)\vec a +\lambda\vec b,(1-\lambda)\vec a +\lambda\vec b  )\cr
&=(1-\lambda)^2(\vec a,\vec a) + \lambda^2(\vec b,\vec b) + 2\lambda(1-\lambda) (\vec a,\vec b)\cr
&\leq (1-\lambda)^2  + \lambda^2 + 2\lambda(1-\lambda) =1.
\end{align*}
Hence, $\dist Z{Z_0}=||\vec c||\leq 1$, and $\paraH$ is convex. Thus,  \eqref{eqtxtrectcont} gives that
\begin{equation}
\bnd{\paraH} \text{ is rectifiable Jordan  curve}.
\label{eqtxtPrHsrCtfTbL}
\end{equation}

\begin{figure}[ht] 
\centerline
{\includegraphics[scale=1.0]{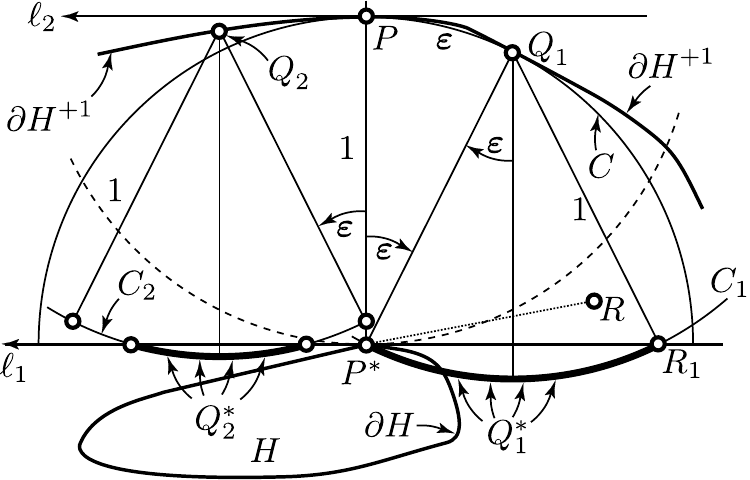}}
\caption{Illustration for the second proof}
\label{figPrHndHbnD}
\end{figure}%

Clearly, $\bnd{\paraH}=\set{X: \dist X H=1}=\set{X: \dist X {\bnd H}=1}$. Define the following relation 
\[\rho:= \set{\pair P{P^\ast}\in \bnd{\paraH}\times \bnd H: \dist P{P^\ast}=1}
\]
between $\bnd{\paraH}$ and $\bnd H$; see Figure~\ref{figPrHndHbnD}. Let $\pair P{P^\ast}\in\rho$ as in the figure. The coordinate system is chosen so that $P$ and $P^\ast$ determine a vertical line and $P^\ast$ is above $P$. Through $P^\ast$ and $P$, let $\lne_1$ and $\lne_2$ be the lines of direction $\pi$; they are perpendicular to $[P^\ast,P]$. We claim that
\begin{equation}
\text{$\lne_1$ is a supporting line of $H$.}
\label{eqtxtsloRbgnPk}
\end{equation}
Suppose to the contrary that $\lne_1$ is not a supporting line and pick a point $R\in H$ strictly on the right of $\lne_1$; see the figure. Since $P\in\bnd{\paraH}$, $\dist P H=1$, whereby $R$ cannot be inside the dotted circle of radius 1 around $P$. However, since this circle touches $\lne_1$ at $P^\ast$, the line segment $[P^\ast,R]$, which is a subset of $H$ by convexity, has a point inside the dotted circle. This contradicts $\dist P H=1$ and proves \eqref{eqtxtsloRbgnPk}. From  \eqref{eqtxtsloRbgnPk}, it follows that if $\pair P Q\in \rho$, then $Q=P^\ast$. Hence, 
\begin{equation*}
\parbox{10.5 cm}{$f\colon \bnd{\paraH}\to \slcr H$, defined by $f(P)=\pair{P^\ast}{\dir{\lne^\ast}}\in \slcr H \iff$\\
$\pair P {P^\ast}\in \rho$,  $\lne^\ast$ is a supporting line, and $\lne^\ast$ is perpendicular to $[P,P^\ast]$}
\end{equation*}
is a mapping. Trivially, 
\begin{equation}
\parbox{10 cm}{$g\colon\slcr H\to   \bnd{\paraH}$, defined by $g(\pair{P^\ast}{\dir{\lne^\ast}})=P \iff$  \\
$\dir{[P^\ast,P]}=\dir{\lne^\ast}-\pi/2$ and $\dist P{P^\ast}=1$,}
\label{eqtxtgmPdfktsrbN}
\end{equation}
is also a mapping. Moreover $f$ and $g$ are reciprocal bijections. Recall from  Luuk\-kainen~\cite[Definition 2.14]{luukk} that a function $\tau\colon X\to Y$ is \emph{Lipschitz in the small} if there are $\delta>0$ and $L\geq 0$ such that $\dist {\tau(x_1)}{\tau(x_2)}\leq L\cdot\dist{x_1}{x_2}$ for all $x_1,x_2\in X$ with $\dist{x_1}{x_2}\leq\delta$.
We know from \cite[2.15]{luukk} that every bounded function with this property is Lipschitzian. We are going to show that $f$ and $g$ are Lipschitz in the small, witnessed by  $\delta=1/5$ and $L=9$, because then $g=f^{-1}$, \eqref{eqtxtcmptWblPsH}, and  \eqref{eqtxtPrHsrCtfTbL}
will imply the theorem. (Note that $\delta=1/5$ and $L=9$ are convenient but none of them is optimal.)

First, we deal with $f$. Assume that $Q_1\in\bnd{\paraH}$ such that $\gamma:=\dist P{Q_1}<\delta=1/5$; see Figure~\ref{figPrHndHbnD}. The angle $\epsilon:=\angle(P P^\ast Q_1)$, which is the length of the circular arc from $P$ to $Q_1$, is close to $\gamma$ in the sense that 
\begin{equation}
\text{ both $\epsilon/\gamma$ and $\gamma/\epsilon $ are in the interval $(99/100, 101/100)$;}
\label{eqtxtkkszdszgyszd}
\end{equation}
this is shown by easy trigonometry since both $\sin(1/5)/(1/5)$ and $(1/5/)\sin(1/5)$ are in the open interval on the right of \eqref{eqtxtkkszdszgyszd}.
Let $C$ and $C_1$ be the circles of radius 1 around  $P^\ast$ and $Q_1$, respectively. 
Since $\dist{Q_1}H=1$, $Q_1$ is not in the interior of (the disk determined by) $C$. Also, since $\lne_1$ is a supporting line of $H$, we have that $\lne_2$ is a supporting line of $\paraH$ and $Q_1$ cannot be strictly on the right (that is, above) $\lne_2$. So either $Q_1$ is on the circle $C$, or it is above $C$ but not above $\lne_2$ (but then we write $Q_2$ instead of $Q_1$ in the figure). 
Denote $f(Q_1)$ by $\pair{Q_1^\ast}{\dir{\lne_1^\ast}}$.
Clearly, $Q_1^\ast$ is on the thick arc of $C_1$ from $P^\ast$ to $R_1$, as indicated in the figure. The length of this arc is $2\epsilon$, whence $\dist{P^\ast}{Q_1^\ast}\leq 2\epsilon$. Since $\lne_1^\ast$ is perpendicular to $[Q_1,Q_1^\ast]$ and $Q_1^\ast$ is on the thick arc of $C_1$, we have that $\dist{\dir{\lne^\ast}} {\dir{\lne_1^\ast}}\leq \epsilon\leq 2\epsilon$. So the Manhattan distance 
$\mdist{\pair{P^\ast}{\dir{\lne^\ast}}}{\pair{Q_1^\ast} {\dir{\lne_1^\ast}}}$,
see \eqref{eqrngmnHdSt}, 
is at most $4\epsilon$. Hence, \eqref{eqdsmDsDsT} and \eqref{eqtxtkkszdszgyszd} yield that $\dist{f(P)}{f(Q_1}\le 9\cdot \dist{P}{Q_1}$. The other case, represented by $Q_2$, follows from the fact that 
$\dist{P^\ast}{Q_2^\ast}$ and $\dist{\dir{\lne^\ast}} {\dir{\lne_2^\ast}}$ are smaller than the respective distances in the previous case. This shows that $f$ is Lipschitz in the small. 


Next, we deal with $g$. Assume that $\pair{P^\ast}{\dir{\lne^\ast}}$ and $\pair{P_1^\ast}{\dir{\lne_1^\ast}}$ are in $\slcr H$ and their distance, $\gamma$, is less than $\delta$. With the auxiliary point  $\pair{P^\ast}{\dir{\lne_1^\ast}}\in \real^4$, which need not be in $\slcr H$, we have that  $\dist{\pair{P^\ast}{\dir{\lne^\ast}}} {\pair{P^\ast}{\dir{\lne_1^\ast}}}\leq \gamma$ and 
 $\dist{\pair{P^\ast}{\dir{\lne_1^\ast}}} {\pair{P_1^\ast}{\dir{\lne_1^\ast}}}\leq \gamma$. Although the auxiliary point is not in the domain of $g$ in general, we can extend the domain of $g$ to this point by \eqref{eqtxtgmPdfktsrbN}. Since 
the secants of the unit circles are shorter than the corresponding circular arcs, whose length equals the corresponding central angles, it follows that 
$\dist{g(\pair{P^\ast}{\dir{\lne^\ast}})} {g(\pair{P^\ast}{\dir{\lne_1^\ast}})} \leq \gamma$.
Since parallel shifts are distance-preserving, $\dist{g(\pair{P^\ast}{\dir{\lne_1^\ast}})} {g(\pair{P_1^\ast}{\dir{\lne_1^\ast}})}= \gamma$. Hence, the triangle inequality yields that $\dist{g(\pair{P^\ast}{\dir{\lne^\ast}})} {g(\pair{P_1^\ast}{\dir{\lne_1^\ast}})}\leq 2\gamma\leq 9\delta$. Thus, $g$ is also Lipschitz in the small, as required. This completes the second proof of Theorem~\ref{thmmain}.
\end{proof}

\begin{figure}[ht] 
\centerline
{\includegraphics[scale=1.0]{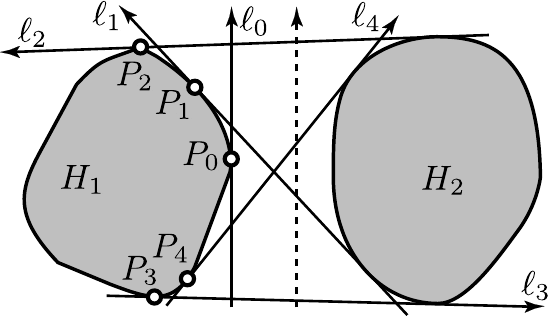}}
\caption{Illustration for Corollary~\ref{corolfourtl}
\label{figfkzTn}}
\end{figure}%

\begin{proof}[Proof of Corollary~\ref{corolfourtl}]
By \eqref{eqtxtsrctspthm}, we have a directed line, the dotted one in Figure~\ref{figfkzTn}, such that $H_1$ is strictly in the left and $H_2$ is strictly on the right of this line. By \eqref{eqtxtprhzMs}, we can take a $\pair{P_0}{\dir{\lne_0}}\in \slcr {H_1}$ such that $\lne_0$ and the dotted line have the same direction. 
For $0<L\in\real$, let 
\[\text{$L\cdot\ucirc$ denote the circle $\set{\pair x y: x^2+y^2=(L/(2\pi))^2}$ of perimeter $L$.}
\] 
Since $\slcr {H_1}$ is a rectifiable simple closed curve by Theorem~\ref{thmmain}, we can let $L$ be its perimeter. Let  
\begin{equation}
\text{$\set{h(t): t\in L\cdot \ucirc}$ be a parameterization of $\slcr{H_1}$}
\label{eqtxtslHpRmHTsVTnM}
\end{equation}
such that $\pair{P_0}{\dir{\lne_0}}=h(t_0)$. We think of the parameter $t$ as the \emph{time} measured in seconds. While the time $t$ is slowly passing, $\pair{P(t)}{\dir{\lne(t)}}$ is slowly and continuously moving forward along $\slcr{H_1}$, and  the directed supporting line $\pair{P(t)}{{\lne(t)}}$ is \emph{slide-turning} forward, slowly and continuously. Since $H_2$ is compact,
the distance $\dist{\lne(t)}{H_2}$ is always witnessed by a pair of points in $\lne(t)\times {{H_2}}$, and this distance is a continuous function of $t$. At $t=t_0$, this distance is positive and $H_2$ is on the right of $\lne_0=\lne(t_0)$. Slide-turn this pointed supporting line around $H_1$ forward during $L$ seconds; that is, make a full turn around $\slcr{H_1}$. By continuity, in the chronological order listed below, there are
\begin{enumerate}
\item a last $t=t_1$ such that $H_2$ is still on the right of $\lne(t)$  (this $t_1$ exists, because it is the first value of $t$ where  $\dist{\lne(t)}{H_2}=0$), 
\item a first $t=t_2$ such that $H_2$ is on the left of $\lne(t)$, 
\item a last $t=t_3$ such that $H_2$ is still on the left of $\lne(t)$, 
\item a first $t=t_4$ such that $H_2$ is  on the right of $\lne(t)$. 
\end{enumerate}
In Figure~\ref{figfkzTn}, $h(t_i)=\pair{P(t_i)}{\dir{\lne(t_i)}}$ is represented by $\pair{P_i}{{\lne_i}}$. Clearly, $\lne_1,\dots,\lne_4$ is the list of all common supporting
lines and these lines are pairwise disjoint.
\end{proof}

\end{document}